\theoremstyle{theorem}
\newtheorem{theorem}{Theorem}
\newtheorem{lemma}[theorem]{Lemma}
\theoremstyle{definition}
\newtheorem*{remark}{Remark}
\DeclareMathOperator*{\GCD}{GCD}
\DeclareMathOperator{\ord}{ord}
\title{On the Greatest Common Divisor of Binomial Coefficients ${n \choose q}, {n \choose 2q}, {n \choose 3q},\dots$}
\author{Carl McTague}
\email{carl.mctague@rochester.edu}
\urladdr{\href{http://www.mctague.org/carl}{www.mctague.org/carl}}
\address{Mathematics Department, University of Rochester, Rochester, NY 14627, USA}
\begin{document}

\begin{abstract}
Every binomial coefficient aficionado$^1$ knows that the greatest common divisor of the binomial coefficients ${n \choose 1},{n \choose 2},\dots,{n \choose n-1}$ equals $p$ if $n=p^i$ for some $i>0$ and equals 1 otherwise. It is less well known that the greatest common divisor of the binomial coefficients ${2n \choose 2},{2n \choose 4},\dots,{2n\choose 2n-2}$ equals (a certain power of 2 times) the product of all odd primes $p$ such that $2n=p^i+p^j$ for some $i\le j$. This note gives a concise proof of a tidy generalization of these facts.
\end{abstract}

\maketitle

\let\thefootnote\relax\footnotetext{$^1$The author regards himself less \emph{aficionado} than \emph{espont\'aneo}, cf \cite[p.~52]{naked-lunch}.}

\begin{theorem}[{\cite{ram-1909}}]
  For any integer $n>1$:
  \begin{align*}
    \GCD_{0<k<n} {n \choose k} =
    \begin{cases}
      p & \text{if $n=p^i$ for some prime $p$ and some integer $i>0$} \\
      1 & \text{otherwise}
    \end{cases}
  \end{align*}
\end{theorem}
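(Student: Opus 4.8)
The plan is to analyze the integer $d := \GCD_{0<k<n}\binom{n}{k}$ one prime at a time, computing for each prime $p$ the exponent $v_p(d) = \min_{0<k<n} v_p\binom{n}{k}$, where $v_p$ denotes the $p$-adic valuation. Two facts come for free: since $\binom{n}{1}=n$ we have $d \mid n$, so only primes dividing $n$ can contribute. The heart of the matter is the characterization that \emph{a prime $p$ divides $d$ if and only if $n$ is a power of $p$}. For the substantive direction, suppose $p \mid d$. Then every middle coefficient vanishes mod $p$, so the generating-function congruence $(1+x)^n \equiv 1 + x^n \pmod p$ holds coefficientwise. Writing $n = p^a m$ with $p \nmid m$ and iterating the freshman's-dream identity $(1+x)^p \equiv 1 + x^p \pmod p$, I would rewrite the left side as $(1 + x^{p^a})^m \bmod p$ and compare it with $1 + x^{p^a m}$; matching the coefficient of $x^{p^a}$ forces $\binom{m}{1}=m \equiv 0 \pmod p$, contradicting $p \nmid m$ unless $m=1$. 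Hence $n = p^a$. The converse is the routine computation $v_p\binom{p^a}{k} = a - v_p(k) + v_p\binom{p^a-1}{k-1} \ge 1$ for $0 < k < p^a$, since then $v_p(k) \le a-1$.

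Combining these, every prime dividing $d$ makes $n$ a power of it, and such a prime is unique. So if $n$ is not a prime power, no prime divides $d$ and $d=1$; if $n = p^a$, then $d$ is a power of this single $p$. It remains to pin the exponent to exactly $1$. The lower bound $v_p(d)\ge 1$ is just $p \mid d$; for the matching upper bound I would exhibit one coefficient of valuation $1$, namely $\binom{p^a}{p^{a-1}} = p\binom{p^a-1}{p^{a-1}-1}$, and check that the remaining binomial is prime to $p$. This is immediate from Lucas (or Kummer's no-carry criterion), since $p^a-1$ is all $(p-1)$'s in base $p$, so the digits of $p^{a-1}-1$ never exceed those of $p^a-1$. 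Thus $v_p(d)=1$ and $d=p$.

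The main obstacle is the forward direction of the prime characterization, namely deducing that $p \mid d$ forces $n$ to be a prime power; the generating-function reduction above is what keeps this clean. A direct alternative, arguing from Kummer's carry-counting theorem that any non-prime-power $n$ admits a carry-free split $k + (n-k)$, also works but needs a short case analysis on the base-$p$ digits of $n$ (either some digit is $\ge 2$, or there are two nonzero digits). Everything past the characterization is bookkeeping with $p$-adic valuations.
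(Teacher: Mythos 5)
Your proof is correct, but it follows a genuinely different route from the paper's. The paper derives Theorem~1 as the $q=1$ case of Theorem~Q, whose proof runs entirely through Kummer's carry-counting theorem: if the minimal base-$p$ representation $n=p^{i_1}+\cdots+p^{i_r}$ has $r>1$ terms, then $\binom{n}{p^{i_1}}$ is prime to $p$ by the subsequence criterion of Lemma~1; if $n=p^{i}$, then no $k$ with $0<k<n$ has base-$p$ expansion forming a subsequence of $(i)$, so every coefficient is divisible by $p$, and the witness $\binom{p^{i}}{(p-1)p^{i-1}}$ produces exactly one carry, pinning the order to $1$. Your forward direction instead uses the congruence $(1+x)^n\equiv 1+x^n \pmod p$ together with the freshman's dream to force $n=p^a$ by comparing coefficients of $x^{p^a}$ --- this is more elementary in that it avoids Kummer entirely for the key implication, and it cleanly packages the ``some digit is $\ge 2$ or there are two nonzero digits'' case split you mention. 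Your lower bound via $v_p\binom{p^a}{k}=a-v_p(k)+v_p\binom{p^a-1}{k-1}$ and your witness $\binom{p^a}{p^{a-1}}=p\binom{p^a-1}{p^{a-1}-1}$ are both valid (the latter checked correctly by Lucas/Kummer), and differ from the paper's choices only cosmetically. What the paper's uniform Kummer treatment buys is immediate generalization to the divisibility-constrained family $\binom{n}{qk}$, where the witness must itself be a multiple of $q$ --- that is precisely why the paper uses $(p-1)p^{i_r-1}$ rather than $p^{i_r-1}$; your generating-function argument buys a shorter, self-contained proof of the classical $q=1$ statement but does not obviously extend to general $q$.
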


\begin{theorem} [{Lemma~12 of \cite{mctague-2014}}]
  For any integer $n>1$ and any prime $p>2$:
  \begin{align*}
    \ord_p \Big[ \GCD_{0<k<n} {2n \choose 2k} \Big] =
    \begin{cases}
      1 & \text{if $2n=p^i+p^j$ for some integers $0\le i\le j$} \\
      0 & \text{otherwise}
    \end{cases}
  \end{align*}
  where $\ord_p(m)$ is the highest power of $p$ dividing an integer $m$.
\end{theorem}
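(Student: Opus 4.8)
The plan is to reduce the statement to a combinatorial fact about base-$p$ digits and then dispatch it by a short case analysis. Since the $p$-adic valuation of a GCD is the minimum of the valuations, I would first write
\[
  \ord_p\Big[\GCD_{0<k<n}{2n \choose 2k}\Big] = \min_{0<k<n}\ord_p{2n \choose 2k},
\]
and then invoke Kummer's theorem (equivalently Legendre's formula): if $s_p(m)$ denotes the sum of the base-$p$ digits of $m$, then $\ord_p{2n \choose 2k}$ equals the number of carries when adding $2k$ and $2n-2k$ in base $p$, namely $\big(s_p(2k)+s_p(2n-2k)-s_p(2n)\big)/(p-1)$. Because $s_p(2n)$ is a constant, the problem becomes: minimize $s_p(a)+s_p(2n-a)$ over even $a$ with $0<a<2n$, where $a=2k$.

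The key observation I would exploit is that, since $p$ is odd, every power $p^\ell$ is odd, so $m \equiv s_p(m) \pmod 2$ for all $m$; in particular $s_p(2n)$ is even and at least $2$ (it cannot be $0$, and $s_p=1$ would force $2n$ to be an odd prime power). The sum $s_p(a)+s_p(2n-a)$ is at least $s_p(2n)$, with equality exactly when each base-$p$ digit of $a$ is at most the corresponding digit of $2n$ (so that the addition $a+(2n-a)$ has no carries); call such an $a$ a digit-submask. Thus the minimum is $0$ precisely when $2n$ admits an even digit-submask $a$ with $0<a<2n$, and since $a \equiv s_p(a)\pmod 2$, this amounts to asking whether some proper, nonzero submask has even digit sum.

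At this point the case analysis is transparent. Writing $D=s_p(2n)$ (even, $\ge 2$), I claim such an even proper submask exists iff $D\ge 4$, i.e. iff $2n\ne p^i+p^j$: indeed $D=2$ forces $2n$ to be either $2p^i$ or $p^i+p^j$ with $i<j$, whose only proper nonzero submasks are the single prime powers $p^i,p^j$, all of odd digit sum; whereas if $D\ge 4$ one can always carve out a submask of digit sum exactly $2$ (either a single digit $\ge 2$, or two distinct nonzero digits), which is even and proper. This settles the ``otherwise'' case: the minimum, hence the order, is $0$.

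It remains to show that when $2n=p^i+p^j$ (so $D=2$, forcing the minimum to be $\ge 1$ by the previous paragraph) the minimum is exactly $1$. Here I would exhibit a single explicit even split: take $2k=p^i+p^{j-1}$ (valid since $n>1$ guarantees $j\ge 1$, and since $p^i+p^{j-1}$ is a sum of two odd numbers it is even and lies strictly between $0$ and $2n$). Then $2n-2k=(p-1)p^{j-1}$, so $s_p(2k)=2$ and $s_p(2n-2k)=p-1$, giving exactly $(2+(p-1)-2)/(p-1)=1$ carry. The main obstacle is this last step --- recognizing that the $D=2$ case needs a witness of exactly one carry rather than zero, and producing one uniformly in both the $i=j$ and $i<j$ subcases; everything else is routine bookkeeping with Kummer's theorem and the parity identity $m\equiv s_p(m)\pmod 2$.
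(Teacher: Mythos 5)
Your proof is correct and follows essentially the same route as the paper: the paper deduces Theorem~2 as the $q=2$ case of Theorem~Q, whose proof likewise combines Kummer's theorem with the digit-submask criterion, the parity observation $s_p(m)\equiv m \pmod 2$ for odd $p$, and the very same one-carry witness (your $2n-2k=(p-1)p^{j-1}$ is the paper's $qk=(p-1)p^{i_r-1}$ read through the symmetry of the binomial coefficient). The only cosmetic difference is that you count carries via the Legendre-style formula $(s_p(2k)+s_p(2n-2k)-s_p(2n))/(p-1)$ rather than directly.
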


\begin{remark}
For a given integer $n>1$, at most one prime $p$ divides the $\GCD$ in Theorem~1. But more than one prime can divide the $\GCD$ in Theorem~2, which is why $\ord_p$ is used to state it. For example, if $n=3$ then $2n=3^1+3^1=5^0+5^1$ and indeed $\GCD_{0<k<3}{6\choose 2k}=15=3\cdot5$. In fact, more than two primes can divide: if $n=15$ then $2n=3^1+3^3=5^1+5^2=29^0+29^1$ and indeed $\GCD_{0<k<15}{30\choose 2k}=435=3\cdot5\cdot29$.
\end{remark}

These theorems are special cases of a (new) more general result:
\newtheorem*{thmq*}{Theorem Q}
\begin{thmq*}
  \label{mainthm}
  For any integers $n>q>0$, and for any prime $p$ congruent to $1$ modulo~$q$:
  \begin{align*}
    \ord_p \Big[ \GCD_{0<k<n/q} {n \choose qk} \Big] =
    \begin{cases}
      1 & \text{if $\alpha_p(n)\le q$} \\
      0 & \text{otherwise}
    \end{cases}
  \end{align*}
  where $\alpha_p(n)$ is the sum of the digits of the base-$p$ expansion of $n$, equivalently the smallest integer $r$ such that $n=p^{i_1}+\cdots+p^{i_r}$ for integers $0\le i_1\le\dots\le i_r$.
\end{thmq*}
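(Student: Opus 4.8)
The plan is to reduce everything to the base-$p$ digits of $n$ via the Legendre--Kummer formula $\ord_p {n \choose m} = \frac{\alpha_p(m) + \alpha_p(n-m) - \alpha_p(n)}{p-1}$, which counts the carries produced when adding $m$ and $n-m$ in base $p$. Since the $p$-adic valuation of a GCD is the minimum of the valuations, I would first rewrite the left-hand side as $\min_{0<m<n,\ q\mid m}\ord_p{n\choose m}$. The hypothesis $p\equiv 1\pmod q$ enters through one clean observation: every power of $p$ is $\equiv 1\pmod q$, so $m=\sum_j a_j p^j\equiv\sum_j a_j=\alpha_p(m)\pmod q$; hence the divisibility constraint $q\mid m$ is equivalent to the purely digit-theoretic condition $q\mid\alpha_p(m)$.

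Next I would settle the dichotomy between valuation $0$ and valuation $\ge 1$. A term ${n\choose m}$ is a $p$-adic unit exactly when there are no carries, i.e.\ (Lucas) when $m$ is a base-$p$ submask of $n$, meaning $0\le a_j\le n_j$ for every digit. Such submasks have digit sum $\alpha_p(m)=\sum_j a_j$ ranging over every integer in $\{0,1,\dots,r\}$ with $r:=\alpha_p(n)$, and $m$ is proper ($0<m<n$) precisely when $0<\alpha_p(m)<r$. So a proper submask that is a multiple of $q$ exists iff the open interval $(0,r)$ contains a multiple of $q$, i.e.\ iff $r>q$. This already disposes of the ``$0$ otherwise'' branch: when $\alpha_p(n)>q$ the minimum valuation is $0$, and when $\alpha_p(n)\le q$ it is $\ge 1$.

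The remaining and genuinely substantive step is the upper bound: when $\alpha_p(n)\le q$ I must exhibit a single multiple $m$ of $q$ with $0<m<n$ and exactly one carry. I would try the candidate $m=qp^{\,j}$, automatically a multiple of $q$, and satisfying $0<m<n$ as soon as $n_{j+1}\ge 1$ (then $m<p^{\,j+1}\le n$, since $q<p$). A short digit computation shows that when $n_j<q$ the subtraction $n-qp^{\,j}$ generates exactly $\ell+1$ carries, where $\ell$ is the number of consecutive zero digits of $n$ immediately above position $j$; so I want a position $j$ with $n_j<q$ and $n_{j+1}\ge 1$, which forces $\ell=0$ and a single carry. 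The crux is therefore the combinatorial lemma that such a $j$ always exists when $n>q$ and $\alpha_p(n)\le q$, and I expect this to be the main obstacle. It yields to a contradiction argument: if no such $j$ existed, every nonzero digit of $n$ would sit directly above a digit $\ge q$; applying this just below the top nonzero digit---which lies in position $\ge 1$, since $n>q\ge\alpha_p(n)$ rules out a single-digit $n$---produces two digits summing to at least $q+1>\alpha_p(n)$, a contradiction. With such a $j$ in hand, $m=qp^{\,j}$ realizes valuation exactly $1$, matching the lower bound and completing the proof.
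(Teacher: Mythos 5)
Your proof is correct and follows the same overall route as the paper: reduce the valuation of the GCD to a minimum of valuations, apply Kummer's carry count, use $p\equiv 1\pmod q$ to convert the divisibility condition $q\mid m$ into the digit condition $q\mid \alpha_p(m)$, and split on whether $\alpha_p(n)>q$ (a proper base-$p$ submask of $n$ with digit sum $q$ gives a term of valuation $0$) or $\alpha_p(n)\le q$ (no proper submask can be a multiple of $q$, so every term has positive valuation). The one place you genuinely diverge is the witness for a term of valuation exactly $1$ in the second case: you take $m=qp^{j}$ and must then establish the existence of a position $j$ with $n_j<q$ and $n_{j+1}\ge 1$, which you do correctly by contradiction using $n>q\ge\alpha_p(n)$. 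The paper instead takes $m=(p-1)p^{i_r-1}$, where $p^{i_r}$ is the top power in the minimal expansion of $n$: this is a multiple of $q$ for the same reason ($p\equiv 1\pmod q$), it satisfies $0<m<n$ because $i_r>0$ forces $(p-1)p^{i_r-1}<p^{i_r}\le n$, and the single-carry condition reduces to the $(i_r-1)$st digit of $n$ being less than $p-1$, which is automatic since all digits of $n$ sum to $\alpha_p(n)\le q<p$ and the top digit already contributes at least $1$. So the paper's choice of witness dispenses with your combinatorial lemma entirely, at the cost of nothing; otherwise the two arguments are the same, and yours is complete as written.
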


\begin{remark}
  Since $p$ is congruent to $1$ modulo $q$, the inequality $\alpha_p(n)\le q$ is equivalent to the equality $\alpha_p(n)=s$, where $s$ is the unique integer in the range $0<s\le q$ congruent to $n$ modulo $q$. (Indeed, since $p$ is congruent to $1$ modulo $q$, so is each power~$p^i$, so $\alpha_p(n)$ is congruent to $n$ modulo~$q$.) For example, for $n>1$:
  \begin{align*}
    \ord_p \Big[ \GCD_{0<k<n} {qn \choose qk} \Big] =
    \begin{cases}
      1 & \text{if $\alpha_p(qn)=q$} \\
      0 & \text{otherwise}
    \end{cases}
  \end{align*}
  while:
  \begin{align*}
    \ord_p \Big[ \GCD_{0<k\le n} {qn+1 \choose qk} \Big] =
    \begin{cases}
      1 & \text{if $\alpha_p(qn+1)=1$} \\
      0 & \text{otherwise}
    \end{cases}
  \end{align*}
  When $q=2$, the former is Theorem~2, while the latter a priori extends Theorem~2. However, due to the symmetry of Pascal's triangle ${n \choose k}={n \choose n-k}$, this extension can already be deduced from Theorem~1.
\end{remark}

\begin{remark}
  The hypothesis that $p$ is congruent to $1$ modulo~$q$ was chosen for its balance of simplicity and generality, and is used in two different ways in the proof of Theorem~Q (below). It can be weakened, for example, to $p>q$ being relatively prime and $p^{i_1}\equiv\cdots\equiv p^{i_r}$ modulo $q$. (In the last paragraph of the proof, replace $(p-1)p^{i_r-1}$ with $qp^{i_r-1}$ when $p^{i_1}\equiv\cdots\equiv p^{i_r}\not\equiv1$ modulo $q$.) But it cannot be eliminated altogether since, for example, $\ord_2\big[\GCD_{0<k<2}{6\choose 3k}\big]=\ord_2(20)=2$.
\end{remark}

\begin{remark}
  A different generalization of Theorem~1 is obtained in \cite{oestreicher-steinig-1985} by determining the greatest common divisor of ${n\choose r}, {n\choose r+1},\dots,{n\choose s}$ for any $r\le s\le n$.
\end{remark}

The proof of Theorem~Q relies on:
\newtheorem*{kummerthm*}{Kummer's Theorem}
\begin{kummerthm*}[{\cite{kummer-1852}, cf \cite[\S1]{granville97}}]
  For any integers $0\le k\le n$ and any prime~$p$:
  \begin{align*}
    \ord_p\Big[{n \choose k}\Big] = \# \big\{ \text{carries when adding $k$ to $n-k$ in base $p$} \big\}
  \end{align*}
\end{kummerthm*}

In particular, it relies on the following consequence of Kummer's theorem:

\begin{lemma}
  \label{lemma:subseq}
  Given two integers $0\le k\le n$, write their base-$p$ expansions in the form:
  \begin{align*}
    k&=p^{j_1}+\cdots+p^{j_s} &
    n&=p^{i_1}+\cdots+p^{i_r}
  \end{align*}
  with $r$ and $s$ minimal, $i_1\le\cdots\le i_r$ and $j_1\le\cdots\le j_s$. Then $\ord_p[{n\choose k}]=0$ if and only if $(j_1,\dots,j_s)$ is a subsequence of $(i_1,\dots,i_r)$.
\end{lemma}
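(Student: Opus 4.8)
The plan is to invoke Kummer's Theorem to convert the divisibility statement into a condition on base-$p$ digits, and then to recognize that this digit condition is, for sorted exponent lists, precisely the asserted subsequence condition. First I would unwind the minimality hypothesis. Writing the ordinary base-$p$ expansions $n=\sum_{l\ge0}a_lp^l$ and $k=\sum_{l\ge0}b_lp^l$ with $0\le a_l,b_l<p$, minimality of $r$ and $s$ forces the sorted list $(i_1,\dots,i_r)$ to contain each index $l$ with multiplicity exactly $a_l$, and likewise $(j_1,\dots,j_s)$ to contain $l$ with multiplicity $b_l$. (Minimality is what rules out either splitting a power apart or fusing $p$ equal powers into a higher one, so that no hidden digit $\ge p$ can occur; this is the one spot where the hypothesis is genuinely used.) In particular $r=\alpha_p(n)$ and $s=\alpha_p(k)$, and the two sorted sequences are nothing but the base-$p$ digit sequences ``spelled out'' with multiplicity.

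Next I would apply Kummer's Theorem: $\ord_p[{n\choose k}]=0$ exactly when there are no carries in adding $k$ to $n-k$ in base $p$. The elementary fact I would record is that a carry-free addition occurs if and only if $b_l\le a_l$ for every $l$, i.e.\ $k$ is dominated by $n$ digit-by-digit. Indeed, if each $b_l\le a_l$ then assigning the digit $a_l-b_l$ to $n-k$ in each position yields a carry-free sum equal to $n$; conversely, a carry-free sum forces $a_l=b_l+(\text{$l$-th digit of }n-k)\ge b_l$ in every position.

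Finally I would translate digit-wise domination into the subsequence statement. Since both $(i_1,\dots,i_r)$ and $(j_1,\dots,j_s)$ are listed in non-decreasing order, $(j_1,\dots,j_s)$ is a subsequence of $(i_1,\dots,i_r)$ if and only if the multiset $\{j_1,\dots,j_s\}$ is contained in the multiset $\{i_1,\dots,i_r\}$, i.e.\ if and only if each index $l$ occurs at least as often in the latter as in the former. By the first step this last condition is exactly $b_l\le a_l$ for all $l$. Chaining the three equivalences completes the proof. I expect no serious obstacle: the entire content lies in correctly matching Kummer's carry count to the digit inequality and in the easy-to-fumble but elementary observation that, for sorted sequences, ``subsequence'' coincides with ``sub-multiset.''
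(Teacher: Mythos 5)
Your proof is correct and follows essentially the same route as the paper's: apply Kummer's theorem, observe that a carry-free addition is equivalent to digit-wise domination of $k$ by $n$, and translate that into the subsequence condition. You simply spell out the intermediate steps (the role of minimality and the sub-multiset reformulation) that the paper leaves implicit.
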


\begin{proof}[Proof of Lemma~\ref{lemma:subseq}]
  By Kummer's theorem, $\ord_p[{n\choose k}]=0$ if and only if there are no carries when adding $k$ to $n-k$ in base~$p$. This happens if and only if each base-$p$ digit of $k$ is $\le$ the corresponding base-$p$ digit of $n$. And this in turn is equivalent to $(j_1,\dots,j_s)$ being a subsequence of $(i_1,\dots,i_r)$.
\end{proof}

\begin{proof}[Proof of Theorem~Q]
To begin, note that for any set $S$ of integers:
\begin{align*}
  \ord_p[\GCD_{m\in S} m] = \min_{m\in S} \ord_p(m)
  \end{align*}
  So this order equals 0 if there is an integer $m$ in $S$ with $\ord_p(m)=0$. Similarly, this order equals 1 if (a) for every integer $m$ in $S$, $\ord_p(m)>0$  and (b) there is an integer $m$ in $S$ with $\ord_p(m)=1$.

  \medskip

  Now, write the base-$p$ expansion of $n$ in the form:
  \begin{align*}
    n=p^{i_1}+\cdots+p^{i_r}
  \end{align*}
  with $r$ minimal and $i_1\le\cdots\le i_r$.
  
  \medskip

  If $r>q$ then by Lemma~\ref{lemma:subseq}:
  \begin{align*}
    \ord_p\Big[{p^{i_1}+\cdots\cdots+p^{i_r} \choose p^{i_1}+\cdots+p^{i_q}}\Big]=0
  \end{align*}
  Since $p$ is congruent to $1$ modulo $q$, so is each power $p^i$, so $p^{i_1}+\cdots+p^{i_q}$ is divisible by $q$, and it follows that $\ord_p[\GCD_{0<k<n/q} {n \choose qk}]=0$.

  \medskip

  If $r\le q$ then $p^{j_1}+\cdots+p^{j_s}$ is not divisible by $q$ for any nonempty proper subsequence $(j_1,\dots,j_s)$ of $(i_1,\dots,i_q)$. Therefore, by Lemma~\ref{lemma:subseq}, $\ord_p{n\choose qk}>0$ for any $k$ with $0<qk<n$. So $\ord_p[\GCD_{0<k<n/q}{n\choose qk}]>0$.

  The largest exponent $i_r$ must be $>0$ since otherwise $n=p^0+\cdots+p^0=r\le q$, and by assumption $n>q$. Since $r$ is minimal, it equals the sum $\alpha_p(n)$ of the base-$p$ digits of $n$, so this sum is by assumption $\le q$. And $q<p$ since $p$ is prime and congruent to $1$ modulo $q$. It follows that the $(i_r-1)$st base-$p$ digit of $n$ is less than $p-1$. So there is exactly one carry when adding $(p-1)p^{i_r-1}$ to $n-(p-1)p^{i_r-1}$. By Kummer's theorem then:
  \begin{align*}
    \ord_p\Big[{p^{i_1}+\cdots+p^{i_r} \choose (p-1)p^{i_r-1}}\Big]=1
  \end{align*}
  Since $p$ is congruent to $1$ modulo $q$, $(p-1)p^{i_r-1}$ is divisible by $q$, and it follows that $\ord_p[\GCD_{0<k<n/q} {n\choose qk}]=1$.
\end{proof}

\noindent \textit{Thanks to Doug Ravenel, David Gepner and Marcus Zibrowius for helpful conversations. Thanks to the referee who suggested generalizing an earlier version of Theorem~Q. Thanks to G\"unter Ziegler for pointing out \cite{ram-1909}. Thanks to the villains \cite{binom-villainy-2014} who haunt the Hopkins mathematics department for helping inspire this work.}
\def\cprime{$'$}

\end{document}